\newtheoremstyle{thm}{0.32em}{0.32ex}{\it}{}{\bf}{.}{0.5em}{\thmname{#1}\thmnumber{ #2}\thmnote{ ({\rm #3})}}
\theoremstyle{thm}
\newtheorem{theorem}{Theorem}[section]
\newtheorem{lemma}[theorem]{Lemma}
\newtheorem{proposition}[theorem]{Proposition}
\newenvironment{Acknowledgements}{\noindent{\bf Acknowledgements:}}{\vspace{3mm}}
\newcounter{chap}
\newcounter{sect}
\titlespacing{\section}{0cm}{0.6cm}{0.2cm}
\def\enumerate{%
 \ifnum \@enumdepth >\thr@@\@toodeep\else
   \advance\@enumdepth\@ne
   \edef\@enumctr{enum\romannumeral\the\@enumdepth}%
     \expandafter
     \list
       \csname label\@enumctr\endcsname
       {\usecounter\@enumctr\def\makelabel##1{\hss\llap{##1}}%
         \addtolength{\parsep}{1.56pt}
         \addtolength{\listparindent}{0pt} 
         \addtolength{\itemsep}{-8pt} 
         \addtolength{\topsep}{-1pt}} 
 \fi}
\def\AA{\mathbb{A}}
\def\CC{\mathbb{C}}
\def\HH{\mathbb{H}}
\def\NN{\mathbb{N}}
\def\PP{\mathbb{P}}
\def\QQ{\mathbb{Q}}
\def\RR{\mathbb{R}}
\def\ZZ{\mathbb{Z}}
\renewcommand{\baselinestretch}{1.2}
\renewenvironment{proof}{\noindent{\em
Proof.}\hspace{0.3em}}{\hfill\qed\vspace{0ex}}
\newenvironment{proofof}[1]{\noindent{\em #1.}\hspace*{0.3em}}{\hfill\qed\vspace{0ex}}
\begin{document}

\renewcommand{\baselinestretch}{1.2}

\begin{center}
{\Large\bf Isogeny orbits in a family of abelian varieties}\\
{\sc By Qian Lin and Ming-Xi Wang}\\
{\it Harvard University, University of Salzburg\\}

\quad E-mail address:  qianlin88@gmail.com, Mingxi.Wang@sbg.ac.at
\end{center}

\begin{abstract}
We prove that if a curve of a non-isotrivial family of abelian
varieties over a curve contains infinitely many isogeny orbits of a
finitely generated subgroup of a simple abelian variety then it is
special. This result fits into the context of Zilber-Pink conjecture
and partially generalizes a result of Faltings. Moreover by using
the polyhedral reduction theory we give a new proof of a result of
Bertrand.
\end{abstract}

\begin{figure}[b]
\rule[-2.5truemm]{5cm}{0.1truemm}\\[1mm]
{\footnotesize {\it AMS Classification (2010):} Primary, 11G18;
Secondary 14K12, 11G50.
\par {\it Key words and phrases:} abelian variety, Siegel modular variety, isogeny, Faltings
height, canonical height, polyhedral reduction theory, Silverman's
specialization theorem.
\par Ming-Xi Wang was partially supported by Austrian Science Fund(FWF): P24574. }
 \end{figure}

\section{Introduction}
In this paper we are interested in a curve of an abelian scheme that
contains infinitely many isogeny orbits of a finitely generated
group of a simple abelian variety. We prove that it is special.
Zilber-Pink have conjectured, roughly speaking, that a subvariety
containing many special points must be special. This generalizes
many well-known problems including conjectures of Mordell-Lang,
Manin-Mumford and Andr\'e-Oort. Special points considered in this
paper are isogeny orbits which are closely related to the
generalized Hecke orbits considered by Zilber-Pink. Therefore our
result fits into the context of their conjectures.

Let $S$ be a smooth irreducible abstract algebraic curve over
$\overline{\QQ}$, and $\pi: A \to S$ be an abelian scheme. An
abelian scheme $A \to S$ refers to a smooth proper group scheme with
geometrically connected fibers. Then $A$ can be regarded as a smooth
family of abelian varieties over $S$. Take an abelian variety $A'$
defined over $\overline{\QQ}$ and a finitely generated group $\Gamma
\subset A'(\overline{\QQ})$. We call a point $q \in
A_t(\overline{\QQ})$, where $t \in S(\overline{\QQ})$, special if
there exist an isogeny $\phi: A' \to A_t$ and $\gamma \in \Gamma$
with $\phi(\gamma)=q$. In this paper we prove

\begin{theorem}\label{main theorem}
Assume that $A'$ is simple and $A$ is non-isotrivial.  If an
irreducible Zariski closed algebraic curve $X$ of $A$, over
$\overline{\QQ}$, contains infinitely many special points, then
either $X$ is a subtorus of $A_q$ for some $q \in S(\overline{\QQ})$
or there exists $n \in \NN $ such that $[n]X(\overline{\QQ})=0$.
\end{theorem}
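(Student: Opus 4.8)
The plan is to follow the now-standard strategy for Zilber–Pink-type problems: reduce the statement about infinitely many special points on $X$ to an equidistribution/height-boundedness dichotomy, and then use either a specialization argument or an o-minimality/functional-transcendence input to conclude that $X$ is special. First I would fix the family $\pi: A \to S$ and, after a finite base change, assume we have a polarization and a level structure so that $A \to S$ is classified by a non-constant morphism $j: S \to \mathcal{A}_g$ to a Siegel modular variety; non-isotriviality of $A$ means $j$ is non-constant. Each special point $q \in X$ lies in some fiber $A_t$ and comes with an isogeny $\phi: A' \to A_t$ and $\gamma \in \Gamma$ with $\phi(\gamma) = q$. The key quantitative handle will be a comparison of heights: the Faltings height of $A_t$ is comparable (up to the degree of the isogeny) to that of $A'$, while the canonical height of $q$ in $A_t$ is controlled by the canonical height of $\gamma$ in $A'$ together with the isogeny degree. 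I would make this precise using the product formula and Silverman's specialization theorem, exactly as flagged in the abstract.

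**Main steps.**

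\begin{enumerate}
\item \emph{Height bounds on the base.} Using the infinitude of special points on $X$ and Silverman's specialization theorem applied to the Néron–Tate height in the family, I would show that the image $j(t)$ of the parameters $t$ of the special points cannot escape to the boundary too fast; more precisely, I would extract a bound relating the height of $t$ on $S$ to the degree $d_\phi$ of the isogeny $\phi$. Here a Masser–Wüstholz-type isogeny estimate (polynomial bound on the minimal isogeny degree in terms of the Faltings height) is the essential ingredient, and this is where the ``polyhedral reduction theory'' and the new proof of Bertrand's result enter: one needs uniform control on the period lattice / the shape of $A_t$ as $t$ varies.
\item \emph{Finitely many isogeny types up to bounded height.} Combining Step 1 with Northcott, I would argue that the special points either accumulate in a single fiber $A_q$, or else the degrees $d_\phi \to \infty$ along an infinite subsequence while the parameters $t$ stay of bounded height — hence lie in a finite set, contradiction — so in fact after passing to a subsequence all special points lie in one fiber $A_q$, OR the relevant parameter set is infinite and we are genuinely in a ``moving'' situation.
\item \emph{The isotrivial/fiber case.} If infinitely many special points lie in a single fiber $A_q$, then $X \subset A_q$ and $X$ contains infinitely many points of the form $\phi(\gamma)$ with $\phi: A' \to A_q$ isogenies and $\gamma \in \Gamma$. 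Since $A'$ is simple, all such $\phi$ have image a fixed abelian subvariety, and the union $\bigcup_\phi \phi(\Gamma)$ lies in a finitely generated subgroup of $A_q$ tensored with the isogeny action; applying the Mordell–Lang theorem (Faltings) to $X \subset A_q$ forces $X$ to be a translate of an abelian subvariety, and the structure of the group of special points then forces either $X$ to be a subtorus or $[n]X = 0$ for some $n$.
\item \emph{The moving case is impossible.} Finally I would rule out the genuinely moving case: if the parameters $t$ are infinite, the height bound from Step 1 together with the Masser–Wüstholz estimate gives a contradiction with Northcott unless the isogeny degrees stay bounded, but bounded degree and the simplicity of $A'$ force $j(t)$ into a finite set as well — again a contradiction. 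This pins us into Step 3, completing the proof.
\end{enumerate}

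**Where the difficulty lies.**

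The main obstacle I expect is Step 1 — converting ``infinitely many special points on the curve $X$'' into a usable height inequality on the base $S$. The subtlety is that the isogeny $\phi$ can have large degree, which simultaneously pushes the Faltings height of $A_t$ up (bounding $t$ away from infinity is then delicate) and shrinks or distorts the canonical height of $q = \phi(\gamma)$; one must balance these two effects. This is precisely the role of the polyhedral reduction theory: it provides uniform estimates for the covolume and successive minima of the period lattices of the $A_t$, which is what makes the Faltings-height/canonical-height comparison effective and uniform in $t$. Getting this uniformity — essentially a quantitative, family version of Bertrand's theorem — is the technical heart of the argument; once it is in place, Steps 2–4 are comparatively formal applications of Northcott, Faltings' Mordell–Lang theorem, and the simplicity hypothesis on $A'$.
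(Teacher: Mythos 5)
Your proposal does not reconstruct the actual mechanism of the proof, and several of its load-bearing steps would fail. The heart of the paper's argument is a quantitative tension that is absent from your outline: on one side, pulling back the fiber polarization along the given isogeny $\phi_i: A' \to A_{t_i}$ gives $\chi(\phi_i^*\mathcal{L}_{t_i}) = d\deg\phi_i$, and Bertrand's lemma (this is what the polyhedral reduction theory proves, Lemma \ref{main lemma}) yields the lower bound $\hat{h}_{A_{t_i},\mathcal{L}_{t_i}}(s(t_i)) \ge c(\deg\phi_i)^{1/g}$, i.e.\ the canonical height of the special point grows like a \emph{power} of the isogeny degree; on the other side, Faltings' inequality $h_F(A_{t_i}) \le h_F(A') + \tfrac12\log\deg\phi_i$, transported to the Weil height of $t_i$ on $\overline{S}$ via Zarhin's trick and the comparison of $h_F$ with the modular height, shows the base height grows at most \emph{logarithmically} in $\deg\phi_i$, while Silverman--Tate ties the fiber canonical height linearly to the base height. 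Power versus logarithm gives the contradiction once $\deg\phi_i \to \infty$. You never produce any lower bound of this kind; indeed you assert that the canonical height of $q=\phi(\gamma)$ is ``controlled by'' that of $\gamma$ and $\deg\phi$, which is exactly the point the paper flags as failing in dimension $>1$ (Section 2, closing remarks) and which Lemma \ref{main lemma} is designed to circumvent. You also misassign the role of the polyhedral reduction theory: it is not a statement about period lattices or successive minima of the varying $A_t$, but a uniformity-in-the-polarization statement on the \emph{fixed} simple $A'$ (via the action of $\mathrm{Aut}(A')$ on the ample cone); and Masser--W\"ustholz is neither used nor obviously useful, since the isogenies here are given, not to be constructed.

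Two further steps of your outline are unsound as stated. First, your Steps 1--2 lean on Silverman's specialization theorem plus Northcott, but the paper explicitly cannot use specialization directly (it is unknown whether an isogeny class contains only finitely many abelian varieties of bounded height, so one cannot show $h(t_i)\to\infty$), and Northcott does not apply because the parameters $t_i \in S(\overline{\QQ})$ have unbounded degree over $\QQ$ — bounded height alone gives no finiteness. This is why the paper works with growth-rate inequalities whose constants are independent of the number field (the final lemma of the paper exists precisely to check this independence in Faltings' comparison). Second, your ``bounded isogeny degree forces $j(t)$ finite'' step is essentially the paper's argument that $\deg\phi_i\to\infty$ (via finiteness of isomorphism classes of polarized structures of bounded degree on a fixed abelian variety), so that part is fine, as is the observation that the single-fiber case should be treated by Mordell--Lang; but without the power-versus-logarithm comparison the ``moving'' case — the core of the theorem — remains unproved in your proposal.
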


In the next section we prove a partially stronger result in case of
a family of elliptic curves, and indicate two major obstructions
preventing that argument from working in case of families of abelian
varieties. In Section 3 we use the polyhedral reduction theory to
give a new proof of a result of Bertrand, which is crucial for this
paper. In Section 4 we present the proof of the main theorem. The
basic strategy of this paper is to compare different heights in
number theory, including geometric Faltings height, Neron-Tate
height and Weil height.

Throughout this paper $h_{F}(A)$ refers to the geometric Faltings
height of an abelian variety $A$ over $\overline{\QQ}$, $h_{X, D}:
X(\overline{\QQ}) \to \RR$ refers to a Weil height function of a
variety over $\overline{\QQ}$ with respect to the divisor $D$ and
$\hat{h}_{A, D} : A(\overline{\QQ}) \to \RR_{\ge 0}$ refers to the
canonical height function of an abelian variety over
$\overline{\QQ}$ with respect to a symmetric divisor $D$. Two
linearly equivalent divisors respectively isomorphic line bundles
are connected by $\sim$ respectively $\cong$. The set of linear
equivalence of divisors or line bundles of $X$ is $\mathrm{Pic}(X)$.
For a complete nonsingular curve $X$ over $\overline{\QQ}$, we have
a canonical surjective homomorphism $\deg: \mathrm{Pic}(X) \to \ZZ.$
Let $\mathcal{L}$ be an invertible sheaf of an abelian variety $A$,
$\chi(\mathcal{L})$ be its Euler characteristic and
$\lambda_{\mathcal{L}}$ be the morphism from $A(\overline{\QQ})$ to
$\mathrm{Pic}(A)$. The subgroup $\mathrm{Pic}^0(A)$ of
$\mathrm{Pic}(A)$ consists of invertible sheaves $\mathcal{L}$ for
which $\lambda_{\mathcal{L}} \equiv 0.$ The Neron-Severi group of
$A$ is denoted by $\mathrm{NS}(A)$. Points of the dual abelian or
Picard variety $A^{\vee}$ of $A$ parametrize the elements of
$\mathrm{Pic}^0(A)$. A homomorphism of abelian varieties $\phi: A
\to B$ gives rise to a dual $\phi^{\vee}: B^{\vee} \to A^{\vee}.$
Write $\mathrm{End}(A) = \mathrm{End}_{\overline{\QQ}}(A)$ and
$\mathrm{End}^0(A) = \mathrm{End}(A) \otimes \QQ,$ on which we have
another $\deg$ function. The set of isomorphism classes of pairs
$(A, \lambda)$ with $A$ an abelian variety of dimension $g$ and
$\lambda$ a polarization of $A$ of degree $d = \chi(\lambda)$ is
parameterized by the Siegel modular variety $M_{g, d}$, in
particular $M_{1, 1} = \AA^1.$ If $\Gamma$ is a finitely generated
abelian group then we let $\Gamma_t$ respectively $\Gamma_{nt}$ to
be the torsion subgroup respectively its complement.

\section{Isogeny orbits in a family of elliptic curves}
In this section we let $S$ be a smooth irreducible abstract
algebraic curve over $\overline{\QQ}$, and $\pi: A \to S$ an abelian
scheme of dimension one. Then $A$ can be regarded as a smooth family
of elliptic curves over $S$. Take an elliptic curve $A'$ defined
over $\overline{\QQ}$ and a $p \in A'(\overline{\QQ})$. We call $q
\in A_t(\overline{\QQ})$, where $t \in S(\overline{\QQ})$, special
if there exists either an isogeny $\phi: A' \to A_t$ with
$\phi(p)=q$, or an isogeny $\phi: A_t \to A'$ with $\phi(q) =p$. We
prove
\begin{proposition}\label{familiy of elliptic curves}
Assume that $A$ is non-isotrivial.  If an irreducible Zariski closed
algebraic curve $X$ of $A$, over $\overline{\QQ}$, contains
infinitely many special points, then either $X$ is some special
fiber $A_t$ that is isogenous to $A'$ or there exists $n \in \NN $
such that $[n]X(\overline{\QQ})=0$.
\end{proposition}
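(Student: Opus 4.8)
The plan is to split into two geometric cases according to the behaviour of $\pi|_X\colon X\to S$, and in the non-trivial case to run a comparison between the fibral N\'eron--Tate height, the Faltings height and the modular ($j$-invariant) height that is forced into a contradiction by Silverman's specialization theorem. If $\pi(X)$ is a single point $t$, then $X=A_t$ because both are irreducible curves, and any one special point of $X$ produces an isogeny between $A'$ and $A_t$, so $A_t$ is isogenous to $A'$ and we are in the first alternative. Hence we may assume $\pi|_X\colon X\to S$ is dominant, hence quasi-finite. Pulling $\pi$ back along $\pi|_X$ gives an elliptic scheme $A_X:=A\times_S X\to X$, and the inclusion $X\hookrightarrow A$ together with $\mathrm{id}_X$ defines a tautological section $\sigma\in A_X(X)$, whose specialization at $x\in X(\overline{\QQ})$ is just $x$ itself, viewed inside $A_{\pi(x)}(\overline{\QQ})$. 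Since $A\to S$ is non-isotrivial and $\pi|_X$ is dominant, $A_X\to X$ is non-isotrivial. If $\sigma$ is a torsion section, say $[n]\sigma=0$, then $[n]X(\overline{\QQ})=0$ and we are done; so assume $\sigma$ is non-torsion. By the Lang--N\'eron theorem and positivity of the canonical height on the non-isotrivial Mordell--Weil group $A_X(\overline{\QQ}(X))$, the function-field canonical height satisfies $\hat h(\sigma)=:c_0>0$.

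Fix a Weil height $h_X$ of positive degree on $X$ and a relatively ample symmetric divisor $D$ on $A$ normalized fibrewise to the class of the origin, and abbreviate $\hat h_{A_t}(\cdot):=\hat h_{A_t,D_t}(\cdot)$. Silverman's specialization theorem gives $\hat h_{A_{\pi(x)}}(x)=(c_0+o(1))\,h_X(x)$ as $h_X(x)\to\infty$. Moreover $j\circ\pi|_X\colon X\to M_{1,1}=\AA^1$ is a non-constant, hence finite, morphism, so $h\big(j(A_{\pi(q)})\big)\asymp h_X(q)$; combined with the standard estimate $\big|h_F(E)-\tfrac1{12}h(j_E)\big|=O\big(\log(2+h(j_E))\big)$ this yields $h_F(A_{\pi(q)})\asymp h_X(q)$ up to $o(h_X(q))$ along any sequence of special points with $h_X\to\infty$. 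Now take a special point $q\in X(\overline{\QQ})$ and put $t=\pi(q)$, so $A_t$ is isogenous to $A'$. If $p$ is torsion, then $q$ is a torsion point of $A_t$ in either kind of specialness, so $\hat h_{A_t}(q)=0$, contradicting $\hat h_{A_t}(q)\sim c_0h_X(q)$ once $h_X(q)$ is large. If $p$ is non-torsion and $q$ is special via an isogeny $\psi\colon A_t\to A'$ with $\psi(q)=p$, then $\hat h_{A_t}(q)=\hat h_{A'}(p)/\deg\psi\le\hat h_{A'}(p)$ is bounded, again contradicting the specialization asymptotics. Finally, if $p$ is non-torsion and $q$ is special via $\phi\colon A'\to A_t$ with $\phi(p)=q$, then $\hat h_{A_t}(q)=\deg(\phi)\,\hat h_{A'}(p)$, whence $\deg\phi\asymp h_X(q)$; but the subadditivity of the (semistable) Faltings height under isogeny, $h_F(A_t)\le h_F(A')+\tfrac12\log\deg\phi$, then forces $h_X(q)\ll h_F(A_t)\ll\log h_X(q)$, absurd for $h_X(q)$ large. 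Thus there is a uniform bound $h_X(q)\le B$ over all special points.

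To conclude, the bound $h_X(q)\le B$ says that every special $q$ lies in a fibre $A_{\pi(q)}$ of bounded Faltings height isogenous to $A'$. By the finiteness of such elliptic curves — precisely the kind of statement governed by the result of Bertrand that Section~3 reproves via polyhedral reduction theory (a minimal isogeny from $A'$ to a target of bounded Faltings height has bounded degree, so there are finitely many targets) — only finitely many $j$-invariants occur, and since $j\circ\pi|_X$ is finite, only finitely many points of $X$ can be special, contradicting the hypothesis. Hence $\sigma$ was torsion and $[n]X(\overline{\QQ})=0$.

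I expect the two load-bearing points to be: (i) the strict positivity $c_0>0$, which is exactly where non-isotriviality is indispensable and which drives the whole contradiction; and (ii) the type-I analysis together with the concluding finiteness step, both of which depend on controlling the Faltings height under isogeny and so rely on the Section~3 result. The reductions and the N\'eron--Tate/modular-height bookkeeping are routine by comparison, although one should take some care that the commensurability $h(j(A_{\pi(q)}))\asymp h_X(q)$ and the comparison $h_F\asymp\tfrac1{12}h(j)$ are applied only along sequences with $h_X\to\infty$, where the $o$-terms are harmless.
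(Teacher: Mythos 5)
Your overall strategy is essentially the paper's: reduce to a (tautological) section, use Silverman's specialization theorem with the strict positivity $c_0>0$ coming from non-isotriviality (trivial $\overline{\QQ}(X)/\overline{\QQ}$-trace, Lang--N\'eron), treat the back orbit via the bound $\hat h_{A_t}(q)=\hat h_{A'}(p)/\deg\psi$, and treat the forward orbit via $\hat h_{A_t}(q)=\deg\phi\cdot\hat h_{A'}(p)$ against Faltings' isogeny inequality and the comparison $h_F\approx\tfrac1{12}h(j)$. The genuine structural difference is that you first extract a uniform bound $h_X(q)\le B$ on all special points and only then invoke a finiteness statement, and in doing so you handle the case of torsion $p$ by the same mechanism; the paper instead disposes of torsion $p$ by citing Andr\'e (CM case) or Habegger-plus-Andr\'e (non-CM case), and for non-torsion $p$ it first proves $h(t_i)\to\infty$ and applies the specialization theorem along the sequence. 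Your uniform treatment is attractive because it removes the appeal to Andr\'e's theorem, provided the finiteness input below is available in full generality.

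That finiteness input is where there is a real problem as written. The statement you need --- within the isogeny class of $A'$ there are only finitely many elliptic curves over $\overline{\QQ}$ of bounded (Faltings or $j$-) height --- is \emph{not} the result of Bertrand reproved in Section~3 of the paper: Bertrand's theorem is a lower bound $\hat h_{A,\mathcal M}(x)\ge C\chi(\mathcal M)^{1/g}$ for non-torsion points of a fixed finitely generated group on a fixed simple abelian variety, uniform in the polarization, and it says nothing about bounded-height members of an isogeny class. The correct input is Habegger's Lemma~5.8 (resting on Szpiro--Ullmo), which is exactly what the paper cites at this point; your parenthetical justification ``a minimal isogeny from $A'$ to a target of bounded Faltings height has bounded degree'' is not a proof but a restatement of that nontrivial result (Faltings' inequality gives an upper bound on $h_F$ of the target in terms of the degree, not the converse bound on the minimal degree in terms of $h_F$). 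You should also note that Habegger's lemma is invoked in the paper for non-CM $A'$; if $A'$ has CM your uniform argument still needs the analogous finiteness for the CM isogeny class (true, e.g.\ because the Faltings height of a CM curve grows with the discriminant of its CM order, but it requires a separate citation or argument --- the paper sidesteps this by quoting Andr\'e in the CM, torsion-$p$ case). With the attribution corrected and the CM case addressed, your argument goes through.
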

\begin{proof}
Firstly we assume that $X$ is not any fiber $A_t$, otherwise there
is nothing to prove. Secondly we notice that it suffices to prove
the result under the assumption that $X$ is a section $s: S \to A$
of $ \pi: A \to S$. Indeed in the general case if let $X'$ be a
smooth resolution of $X$, then $A \times_{S} X' \to X'$ is also a
smooth family of elliptic curves over $X'$ and we write $f: X' \to
A$ to be the natural morphism.
\begin{displaymath}
\xymatrix{
  X' \ar@/_/[ddr]_{f} \ar@/^/[drr]^{id}
    \ar@{.>}[dr]|-{s}                   \\
   & A \times_S X' \ar[d]^{pr_1} \ar[r]_{pr_2}
                      & X' \ar[d]_{\pi \circ f}    \\
   & A \ar[r]^{\pi }     & S               }
\end{displaymath}
The above commutative diagram provides a section $s: X' \to A
\times_S X'$ of $A \times_S X'/X'$. Moreover it is easy to check
that $s(X') \subset A \times_S X'$ contains infinitely special
points if and only if $X \subset A$ does and that $[n]X = 0$ if and
only if $([n] \circ s) X = 0$. Therefore the general case reduces to
a special case that $X$ comes from a section.

If $p$ is a torsion point and $A'$ has complex multiplication, then
our assertion is a special case of a result of Andr\'e \cite{Andre}.
If $p$ is a torsion point and $A'$ has no complex multiplication,
then by a lemma of Habegger \cite[Lemma 5.8]{Habegger} there are
only finitely many elliptic curves isogenous to $A'$ with bounded
height (The proof of Habegger's statement relies heavily on the work
of Szpiro and Ullmo). This makes Andr\'e's argument valid line by
line after replacing only Poonen's lemma \cite{Poonen} by Habegger's
lemma. From here on we assume that $p$ is not torsion.

Given an elliptic curve $E$ over $\overline{\QQ}$ we write
$\hat{h}_{E}: E(\overline{\QQ}) \to \RR_{\ge 0}$ for the canonical
height function with respect to the divisor given by the zero
section $(0)$ of $E/\overline{\QQ}$. For any other symmetric divisor
$D$ of $E$, the associated canonical height function satisfies
\begin{eqnarray*}
\hat{h}_{E,\ D} = \deg{D} \cdot \hat{h}_{E},
\end{eqnarray*}
as $D \sim \deg{D}(0)$ for any symmetric divisor $D$ of $E$. Before
proceeding we notice that if $\phi: E' \to E$ is an isogeny of
elliptic curves over $\overline{\QQ}$ with $\phi(a) = b$, where $a,
b$ are closed points, then we have
\begin{eqnarray*}
\hat{h}_{E}(b) = \hat{h}_{E',\ \phi^{*}(0)}(a) = \deg(\phi^{*}(0))
\hat{h}_{E'}(a) = \deg{\phi} \cdot \hat{h}_{E'}(a).
\end{eqnarray*}

Let $D$ be the divisor given by the zero section of the abelian
scheme $A/S$, then the canonical height function on
$A_t(\overline{\QQ})$ with respect to $\deg D_t$ is simply
$\hat{h}_{A_t}$ for any $t \in S(\overline{\QQ}).$ According to the
non-isotriviality of $A$, the modular map $j: S(\overline{\QQ}) \to
\AA^1(\overline{\QQ})$ is non-constant. Without ambiguity we write
$h: \AA^1(\overline{\QQ}) \to \RR$ for the standard Weil height
function on $\AA^1$ and $h: s \in S(\overline{\QQ}) \to h(j(s)) \in
\RR$ for the Weil height function on $S$ with respect to $j^*((0)).$

There are two types of special points. The back orbit of $p$ and the
forward orbit of $p$. We first assume that $X$ contains infinitely
many back orbits $q_i(i = 1, 2, \ldots)$ of $p$. If $q_i \in
A_{t_i}$, where $t_i \in S(\overline{\QQ})$, then $q_i = s(t_i)$.
Let $\phi_i: A_{t_i} \to A'$ be the isogeny that satisfies
$\phi_i(q_i)=p$ then we have
\begin{eqnarray}\label{relation of canonical heights}
\hat{h}_{A_{t_i}}(q_i) = \frac{\hat{h}_{A'}(p)}{\deg{\phi_i}}.
\end{eqnarray}
Using the lemma of Habegger \cite[Lemma 5.8]{Habegger}, there are
only finitely many elliptic curves over $\overline{\QQ}$ within the
isogeny class of $E'$ with bounded Weil height. Using the
non-isotriviality of $A$, given any elliptic curve $E_1$ there are
only finitely many $i$ such that $E_{t_i}$ is isomorphic to $E_1$
over $\overline{\QQ}$. These two facts clearly lead to
\begin{eqnarray}\label{goestoinfinity}
\lim_{i \to \infty} h(t_i) \to \infty.
\end{eqnarray} By Silverman's specialization theorem
\cite{Silverman} there is a constant $C$ such that
\begin{eqnarray}\label{Specialization theorem}
\lim_{h(t) \to \infty} \frac{\hat{h}_{A_t}(s(t))}{h(t)} = C.
\end{eqnarray}
Because of (\ref{goestoinfinity}) we can apply (\ref{Specialization
theorem}) to $t_i$ and obtain
\begin{eqnarray*}
\lim_{i \to \infty} \frac{\hat{h}_{A_t}(q_i)}{h(t_i)} = C.
\end{eqnarray*}
Using (\ref{relation of canonical heights}) we have
\begin{eqnarray*}
\lim_{i \to \infty} \frac{\hat{h}_{A'}(p)}{\deg{\phi_i} \cdot
h(t_i)} = C.
\end{eqnarray*}
As $p$ is not torsion, we have $h_{A'}(p) > 0$ and therefore the
above identity gives $C = 0$. Recall in Silverman's specialization
theorem \cite{Silverman}, the constant $C =0$ means that the
canonical height of $X$ regarded as a point in the abelian variety
$A_{\eta}$ over the generic point is zero. According to the
non-isotriviality of $A$, the $\overline{\QQ}(S)/\overline{\QQ}$
trace of $A$ is trivial. This implies that $X$ is a torsion of the
abelian variety over the generic fiber. There exists $n \in \NN$
such that $[n]X(\overline{\QQ}) = 0$, a contradiction to our
assumption that $p$ is not torsion.

Now we assume that $X$ contains infinitely many forward orbits
$\{q_i\}_{i=1}^{\infty}$ of $p$. Let $\phi_i: A' \to A_{t_i}$ be the
isogeny that satisfies $\phi_i(p)=q_i$, then
\begin{eqnarray}\label{relation of canonical heights2}
\hat{h}_{A_{t_i}}(q_i) = \deg{\phi_i} \cdot \hat{h}_{A'}(p).
\end{eqnarray}
The inequality of Faltings \cite{Fa83} implies that
\begin{eqnarray}\label{Faltings inequality}
h_F(A_{t_i}) \leq h_F(A') + \log{\deg{\phi_i}}/2.
\end{eqnarray}
We claim that $\lim_{i \to \infty} \deg{\phi_i} = \infty$. Otherwise
there are infinitely many $i$ such that $A_{t_i}$ are isomorphic to
each other over $\overline{\QQ}$, a contradiction to the fact that
$A$ is non-isotrivial. Therefore (\ref{relation of canonical
heights2}), (\ref{Faltings inequality}) and $\hat{h}_{A'}(p) > 0$
lead to
\begin{eqnarray*}
\lim_{i \to \infty} \frac{\hat{h}_{A_{t_i}}(s(t_i) = q_i)}{h(t_i)} =
\infty.
\end{eqnarray*}
The identity (\ref{goestoinfinity}) is still valid. This makes the
above equality a contradiction to Silverman's specialization theorem
(\ref{Specialization theorem}).
\end{proof}

The above argument does not work in the context of families of
abelian varieties for the following reasons. Firstly the relation
between canonical heights of points on isogenious abelian varieties
is not as simple as in (\ref{relation of canonical heights}).
Secondly the lemma of Habegger is not known in higher dimensional
case. More precisely we do not know whether within an isogeny class
of abelian varieties there are only finitely many ones with bounded
height. Without this result we have no validity of
(\ref{goestoinfinity}) in general yet, which is essential if we want
to directly apply Silverman's specialization theorem.

\section{Polyhedral reduction theory and canonical heights}
In this section we give a proof of Lemma \ref{main lemma} below,
based on the polyhedral reduction theory \cite{AMRT}. Actually this
lemma is not new. G. R\'emond pointed out to us that it is
equivalent to the main theorem of Bertrand \cite{Ber} in case of
simple abelian varieties, linked by the theorem of Mordell-Weil. We
still present the proof here, as our approach is rather distinct
from Bertrand's original one.

Throughout this section $A$ is an abelian variety over
$\overline{\QQ}$. Any symmetric line bundle $\mathcal{L}$ of $A$
defines a canonical height function $\hat{h}_{A, \mathcal{L}}$ that
is quadratic on $A(\overline{\QQ})$. We remark that $\hat{h}_{A,
\mathcal{L}}$ depends only on the class of $\mathcal{L}$ in
$\mathrm{NS}(A)$. Indeed if a symmetric line bundle $\mathcal{L}$
maps to zero in the short exact sequence
\begin{eqnarray*} 0 \rightarrow \mathrm{Pic}^0(A) \rightarrow
\mathrm{Pic}(A) \rightarrow \mathrm{NS}(A) \rightarrow 0,
\end{eqnarray*}
then we have $\mathcal{L} \in \mathrm{Pic}^0(A)$. This leads to
$[-1]^*\mathcal{L} = \mathcal{L}^{-1}$. Together with
$[-1]^*\mathcal{L} = \mathcal{L}$ we have $\mathcal{L}^2 = 0$ and
therefore $\hat{h}_{A, \mathcal{L}} \equiv 0.$ Hence there will be
no confusion when we label the canonical height function of a
symmetric line bundle to its image in the Neron-Severi group.

When $A$ is a simple abelian variety, a recent result of Kawaguchi
and Silverman \cite{KawaguchiSilverman} tells us that for any
nonzero nef symmetric $\mathcal{L} \in \mathrm{Pic}(A) \otimes \RR$
the canonical height $\hat{h}_{A, \mathcal{L}}(x) = 0$ if and only
if $x \in A(\overline{\QQ})_{t}$.

The endomorphism algebra $\mathrm{End}^0(A)$ is semi-simple and
contains $\mathrm{End}(A)$ as a lattice. The unit
$(\mathrm{End}^0(A) \otimes \RR)^{\times}$ is reductive, and
$\mathrm{Aut}(A)$ is an arithmetic group. The function $\deg$
extends to a homogeneous function of degree $2g$ on
$\mathrm{End}^0(A) \otimes \RR$.

For any line bundle $\mathcal{L}$ the theorem of square leads to a
group homomorphism
\begin{eqnarray*}
\lambda_{\mathcal{L}}: A(\overline{\QQ}) \to
A^{\vee}(\overline{\QQ})
\end{eqnarray*}
which takes $x$ to $T_x^*\mathcal{L} \otimes \mathcal{L}^{-1}$. It
is an isogeny if and only if $\mathcal{L}$ is ample.

From here on we fix an ample line bundle  $\mathcal{N}$, which
defines a Rosati involution $\dagger$ of $ \phi \in
\mathrm{End}^0(A)$ by $\phi^{\dagger} = \lambda_{\mathcal{N}}^{-1}
\circ \phi^{\vee} \circ \lambda_{\mathcal{N}}$. The map
\begin{eqnarray*}
\mathrm{NS}(A)_{\QQ} \to \mathrm{End}^0(A)
\end{eqnarray*}
defined by $\mathcal{L} \mapsto \lambda_{N}^{-1} \circ
\lambda_{\mathcal{L}}$ identifies $\mathrm{NS}(A)_{\QQ}$ with the
subset of $\mathrm{End}^0(A)$ of elements fixed by $\dagger$. Given
$\phi \in \mathrm{Aut}(A)$ and $\mathcal{L} \in \mathrm{Pic}(A)$ it
is straightforward to check that $\lambda_{\phi^*(\mathcal{L})} =
\phi^{\vee} \circ \lambda_{\mathcal{L}} \circ \phi$. Which extends
to be an action of $\mathrm{End}^0(A)$ on $\mathrm{NS}(A)_{\QQ}
\subset \mathrm{End}^0(A)$ by $\alpha^{\phi} = \phi^{\dagger} \circ
\alpha \circ \phi.$ The bilinear form
\begin{eqnarray*}
\langle \phi, \psi \rangle \mapsto \mathrm{Tr}(\phi \circ
\psi^{\dagger}).
\end{eqnarray*}
on $\mathrm{End}^0(A) \times \mathrm{End}^0(A)$ is positive
definite.

As a finite dimensional algebra over $\RR$ with a positive
involution, $\mathrm{End}^0(A) \otimes \RR$ is isomorphic to
$\prod_{i}M_{r_i}(\RR) \times \prod_{j}M_{s_j}(\CC) \times
\prod_{k}M_{t_k}(\HH)$ where the involution is given by
conjugations. Under this identification $\mathrm{N}^1(A) =
\mathrm{NS}(A) \otimes \RR$ respectively the ample cone
$\mathrm{Amp}(A)$ is isomorphic to $\prod_{i}H_{r_i}(\RR) \times
\prod_{j}H_{s_j}(\CC) \times \prod_{k}H_{t_k}(\HH)$ respectively
$\prod_{i}P_{r_i}(\RR) \times \prod_{j}P_{s_j}(\CC) \times
\prod_{k}P_{t_k}(\HH)$, where $H_r$ is the space of symmetric or
Hermitian symmetric matrices and $P_r$ consists of positive ones.
Let $G(\mathrm{Amp}(A))$ be the automorphism group of the cone
$\mathrm{Amp}(A)$ and $G(\mathrm{Amp}(A))^0$ its identity component.
The homomorphism $(\mathrm{End}^0(A) \otimes \RR)^{\times} \to
G(\mathrm{Amp}(A))^0$ is surjective. According to Ash's main result
of polyhedral reduction theory, there exists a rational polyhedral
cone $F \subset \overline{\mathrm{Amp}(A)}$ such that
$(\mathrm{Aut}(A) \cdot F) \cap \mathrm{Amp}(A) = \mathrm{Amp}(A).$
For more details of this paragraph we refer \cite{AMRT} and
\cite{Prendergast}.

Using the theorem of Ash we give a new proof of
\begin{lemma}[Bertrand]\label{main lemma}
Let $A$ be a simple abelian variety defined over $\overline{\QQ}$,
and let $\Gamma \subset A(\overline{\QQ})$ be a finitely generated
group. Then there exists a constant $C > 0$ depending on $\Gamma$
and $A$ such that for any symmetric ample divisor $\mathcal{M} \in
\mathrm{Pic}(A)$ and nontorsion $x \in \Gamma$ we have
\begin{eqnarray*}
\hat{h}_{A, \mathcal{M}}(x) \ge C (\chi(\mathcal{M}))^{1/g}.
\end{eqnarray*}
\end{lemma}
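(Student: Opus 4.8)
\section*{Proof proposal}

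The plan is to transport the whole problem into the semisimple algebra $\mathrm{End}^0(A)$ and then run a linearity–plus–compactness argument on the polyhedral cone $F$ furnished by Ash's theorem. First I would fix once and for all the ample line bundle $\mathcal{N}$ chosen above and, for a symmetric ample $\mathcal{M}\in\mathrm{Pic}(A)$, set $\alpha_{\mathcal{M}}:=\lambda_{\mathcal{N}}^{-1}\circ\lambda_{\mathcal{M}}\in\mathrm{NS}(A)_{\QQ}\subset\mathrm{End}^0(A)$, which lies in the ample cone $\mathrm{Amp}(A)$ because $\mathcal{M}$ is ample. Since $\deg$ is multiplicative on isogenies and homogeneous of degree $2g$ on $\mathrm{End}^0(A)\otimes\RR$, one has $\chi(\mathcal{M})^2=\deg\lambda_{\mathcal{M}}=\deg\lambda_{\mathcal{N}}\cdot\deg\alpha_{\mathcal{M}}=\chi(\mathcal{N})^2\,\deg\alpha_{\mathcal{M}}$, so $\chi(\mathcal{M})^{1/g}=\chi(\mathcal{N})^{1/g}\,\deg(\alpha_{\mathcal{M}})^{1/2g}$. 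As $\chi(\mathcal{N})$ is a fixed positive constant, the lemma is equivalent to the estimate $\hat{h}_{A,\mathcal{M}}(x)\ge C_1\,\deg(\alpha_{\mathcal{M}})^{1/2g}$ for nontorsion $x\in\Gamma$.

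Next I would apply Ash's theorem. Because $\alpha_{\mathcal{M}}\in\mathrm{Amp}(A)=(\mathrm{Aut}(A)\cdot F)\cap\mathrm{Amp}(A)$, we can write $\alpha_{\mathcal{M}}=\beta^{\phi}=\phi^{\dagger}\circ\beta\circ\phi$ with $\phi\in\mathrm{Aut}(A)$ and $\beta\in F\cap\mathrm{Amp}(A)$. Since $\beta^{\phi}$ is precisely the Néron--Severi class of $\phi^{*}\mathcal{L}_{\beta}$ for a symmetric class $\mathcal{L}_{\beta}$ representing $\beta$, and the canonical height depends only on the class in $\mathrm{NS}(A)$, functoriality of canonical heights under the homomorphism $\phi\colon A\to A$ gives $\hat{h}_{A,\mathcal{M}}(x)=\hat{h}_{A,\mathcal{L}_{\beta}}(\phi(x))$. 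Moreover $\phi\in\mathrm{Aut}(A)$ has $\deg\phi=1$, hence $\deg\phi^{\dagger}=\deg\phi^{\vee}=1$ and $\deg\alpha_{\mathcal{M}}=\deg\beta$. The crucial observation is that, although $\phi$ ranges over the infinite group $\mathrm{Aut}(A)$, the point $\phi(x)$ always lies in $\widetilde\Gamma:=\mathrm{End}(A)\cdot\Gamma$, which is again a finitely generated subgroup of $A(\overline{\QQ})$ since $\mathrm{End}(A)$ is a finitely generated $\ZZ$-module; and $\phi(x)$ is nontorsion. So it suffices to find $C_1>0$, depending only on $A$ and $\Gamma$, with $\hat{h}_{A,\mathcal{L}_{\beta}}(y)\ge C_1\,\deg(\beta)^{1/2g}$ for all $\beta\in F$ and all nontorsion $y\in\widetilde\Gamma$.

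For this reduced statement I would use that $F$ is a rational polyhedral cone, $F=\sum_{i=1}^{m}\RR_{\ge 0}e_i$ with nonzero nef classes $e_i\in\overline{\mathrm{Amp}(A)}$, together with linearity of $\mathcal{L}\mapsto\hat{h}_{A,\mathcal{L}}(y)$ in the Néron--Severi class: for $\beta=\sum_i t_i e_i$ with $t_i\ge 0$, $\hat{h}_{A,\mathcal{L}_{\beta}}(y)=\sum_i t_i\,\hat{h}_{A,e_i}(y)\ge\bigl(\min_i\hat{h}_{A,e_i}(y)\bigr)\sum_i t_i$. Here is where simplicity enters: for each nonzero nef symmetric class $e_i$, the result of Kawaguchi--Silverman \cite{KawaguchiSilverman} says $\hat{h}_{A,e_i}$ vanishes on $A(\overline{\QQ})$ exactly at torsion points, so its restriction to the finitely generated group $\widetilde\Gamma$ induces a positive definite quadratic form on $\widetilde\Gamma/\widetilde\Gamma_{t}$, whose minimum $c_i>0$ over nonzero lattice points is attained. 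Setting $c_0:=\min_i c_i>0$ yields $\hat{h}_{A,\mathcal{L}_{\beta}}(y)\ge c_0\sum_i t_i$ for every nontorsion $y\in\widetilde\Gamma$. On the other hand, $\deg$ is continuous and homogeneous of degree $2g$, hence bounded by some $D$ on the compact polytope $\{\sum_i t_i=1\}\cap F$, so $\deg(\beta)\le D\bigl(\sum_i t_i\bigr)^{2g}$ on $F$. Combining the two inequalities gives $\hat{h}_{A,\mathcal{L}_{\beta}}(y)\ge c_0 D^{-1/2g}\,\deg(\beta)^{1/2g}$, so $C_1=c_0 D^{-1/2g}$ works, and then $C=C_1\,\chi(\mathcal{N})^{-1/g}$ proves the lemma.

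I expect the real obstacle to be conceptual rather than computational: the quotient $\mathrm{Amp}(A)/\mathrm{Aut}(A)$ is not compact, so no naive compactness argument can directly bound $\hat{h}_{A,\mathcal{M}}(x)/\chi(\mathcal{M})^{1/g}$; it is exactly the polyhedral reduction theory of Ash that trades this noncompact quotient for the finite-sided rational cone $F$, on which the elementary linearity argument succeeds. The remaining inputs---functoriality and $\QQ$-linearity of canonical heights, $\deg\phi=1$ for automorphisms, finite generation of $\mathrm{End}(A)\cdot\Gamma$, and strict positivity of nef canonical heights at nontorsion points on a simple abelian variety (Kawaguchi--Silverman)---are either standard or already recorded above, and should require only routine verification.
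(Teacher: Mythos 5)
Your proposal is correct and takes essentially the same route as the paper's proof: enlarge $\Gamma$ to the finitely generated group $\mathrm{End}(A)\cdot\Gamma$, invoke Ash's rational polyhedral fundamental domain $F$ for $\mathrm{Aut}(A)$ acting on $\mathrm{Amp}(A)$, obtain a uniform positive lower bound for the canonical heights of the finitely many nef generators of $F$ at nontorsion points via Kawaguchi--Silverman, compare $\chi(\mathcal{M})^2$ with $\deg$ using homogeneity of degree $2g$, and move a general $\mathcal{M}$ into $F$ by an automorphism (which preserves $\chi$ and the group). The only differences are cosmetic: you bound below by $\bigl(\min_i\hat{h}_{A,e_i}(y)\bigr)\sum_i t_i$ and bound $\deg$ via compactness of the polytope $\{\sum_i t_i=1\}\cap F$, where the paper uses $c_1\max_i r_i$ and $\deg\mathcal{M}\le c_3\sum_i r_i^{2g}$, and you perform the automorphism transfer at the outset rather than at the end.
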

\begin{proof}
It is well-known that $\mathrm{End}(A)$ is of finite rank, hence
$\mathrm{End}(A)(\Gamma)$ is also a finitely generated group.
Therefore it suffices to prove our lemma under the assumption that
$\Gamma$ is invariant under $\mathrm{Aut}(A).$

By the polyhedral reduction theory \cite{AMRT}, there is a rational
polyhedral fundamental domain $F \subset \overline{\mathrm{Amp}(A)}$
under the action of $\mathrm{Aut}(A).$ The rationality of $F$
guarantees that there is a basis $\{v_1, \ldots, v_t \} \subset
\mathrm{NS}(A) \cap \overline{\mathrm{Amp}(A)}$ such that if $w \in
F \cap \mathrm{NS}(A)$ then there are nonnegative real numbers $r_i$
with $w = \sum_{i=1}^t r_iv_i.$

We have $\Gamma = \Gamma_{t} + \Gamma_{nt}$. Because $v_i$ are nef,
a result of Kawaguchi-Silverman \cite{KawaguchiSilverman} tells us
that $\hat{h}_{A, v_i}$ is a positive bilinear function on the
finitely generated abelian $\Gamma_{nt},$ therefore there exists a
constant $c_1$ such that
\begin{eqnarray}\label{basic nerontate theory}
\hat{h}_{A, v_i}(\gamma) \ge c_1
\end{eqnarray}
for all $1 \leq i \leq t$ and all non-zero $\gamma \in \Gamma_{nt}$.
Furthermore for any $\mathcal{M} \in \mathrm{Amp}(A)$ and $x = x_1
+x_2 \in \Gamma$ we have $\hat{h}_{A, \mathcal{M}}(x) = \hat{h}_{A,
\mathcal{M}}(x_2).$ Consequently the inequality (\ref{basic
nerontate theory}) is valid for all $1 \leq i \leq t$ and
non-torsion $\gamma \in \Gamma$.

Take a symmetric ample devisor with image $\mathcal{M} \subset F$,
then we have nonnegative real numbers $r_i$ such that $\mathcal{M} =
\sum_{i=1}^t r_iv_i$. In particular for any non-torsion $\gamma \in
\Gamma$ we have
\begin{eqnarray}\label{111}
\hat{h}_{A, \mathcal{M}}(\gamma) \ge c_1 \max_{i=1}^t\{r_i\}
\end{eqnarray}
The degree function $\deg: \mathrm{End}^0(A) \otimes \RR \to \RR$ is
homogeneous of degree $2g$. Take $\deg\mathcal{M}$ to be the degree
of the image of $\mathcal{M}$ in $\mathrm{End}^0(A) \otimes \RR$.
Then we shall have
\begin{eqnarray}\label{222}
\deg \mathcal{M} =
\deg(\lambda_{\mathcal{M}})/\deg(\lambda_{\mathcal{N}}) = c_2
\chi(\mathcal{M})^2
\end{eqnarray}
where $c_2 = 1/\deg(\lambda_{\mathcal{N}}).$ The homogeneity of
$\deg: \mathrm{End}^0(A) \otimes \RR \to \RR$ implies that there is
a positive constant $c_3$ which depends only on $v_i$ but not on
$\mathcal{M}$ such that
\begin{eqnarray}\label{333}
\deg{\mathcal{M}} \leq  c_3\sum_{i=1}^t r_i^{2g}.
\end{eqnarray}
The above (\ref{111}), (\ref{222}) and (\ref{333}) obvious lead to a
constant $C
> 0$ such that
\begin{eqnarray*}
\hat{h}_{A, \mathcal{M}}(\gamma) \ge C (\chi{(\mathcal{M})})^{1/g}
\end{eqnarray*}
for all $\mathcal{M} \subset F \cap \mathrm{Amp}(A)$ and non-torsion
$\gamma \in \Gamma$.

For general $\mathcal{M} \in \mathrm{Amp}(A)$, there exists $\sigma
\in \mathrm{Aut}(A)$ such that $\sigma^{*}(\mathcal{M}) \in F$. We
have assumed that $\sigma^{-1}(x) \in \Gamma$ and it is clear that
$\chi{(\sigma^{*}(\mathcal{M}))} = \chi{(\mathcal{M})}$. Therefore
for any non-torsion $\gamma \in \Gamma$
\begin{eqnarray*}
\hat{h}_{A, \ \mathcal{M}}(\gamma) = \hat{h}_{A,\
\sigma^{*}(\mathcal{M})}(\sigma^{-1}(\gamma)) \ge C
(\chi{\mathcal{(M)}})^{1/g},
\end{eqnarray*}
which proves what has been claimed in the lemma.
\end{proof}

\section{Proof of the main theorem}
It is unknown to us whether in an isogeny class of abelian varieties
there are only finitely many ones with bounded height. Therefore we
can not directly use Silverman's specialization theorem as before.
Instead we shall use some arguments of \cite{Silverman} to prove our
main theorem.

\begin{proofof}{Proof of Theorem \ref{main theorem}}
Firstly we may assume that $\Gamma$ is invariant under
$\mathrm{Aut}(A')$. Secondly by the same trick used in the proof of
Proposition \ref{familiy of elliptic curves} we assume that $X$ is a
section $s: S \to A$ of $\pi: A \to S$. We write $\epsilon: S \to A$
for the zero section.

In the decomposition $\Gamma = \Gamma_{t} + \Gamma_{nt}$,
$\Gamma_{t}$ is finite, there exists a positive integer $n$ such
that $[n] \Gamma_{t} =0.$ If there are infinitely many $t \in
S(\overline{\QQ})$ such that there exists $\gamma_t \in \Gamma_{t}$
and isogeny $\phi_t: A' \to A_t$ with $\phi_t(x_t) = s(t)$, then we
also have $[n] s(t) =0$. This implies that $[n]X(\overline{\QQ})$
intersects with the zero section infinitely many times. This leads
to $[n]X(\overline{\QQ}) = 0$.

Now we assume that there are infinitely many distinct $t_i \in
S(\overline{\QQ}) (i=1, 2, 3, \ldots )$ such that exists $\gamma_i
\in \Gamma \setminus \Gamma_{t}$ and isogenies $\phi_i: A' \to
A_{t_i}$ with $\phi_i(\gamma_i) = s(t_i)$.

By theory of Theta functions, there is a smooth semiabelian scheme
$\overline{A}/\overline{S}$ extending the family $A/S$ (where
$\overline{S}$ is the smooth compactification of $S$) together with
a symmetric very ample line bundle $\mathcal{L}'$ of $A/S$ that
extends to a very ample one $\mathcal{L}$ of
$\overline{A}/\overline{S}$. Moreover $\mathcal{L}$ is indeed very
ample on $\overline{A}/\overline{\QQ}$ and makes the total space a
projective variety. For details of this paragraph we refer
\cite{Namikawa}.

The Euler characteristic $\chi(\mathcal{L}_t)$ is a constant
function of $t \in S(\overline{\QQ})$, and we denote it by $d$.
Because $A$ is non-isotrivial, the modular map $j: S \to M_{g, d}$
is not constant. We claim that $$\lim_{i \to \infty} \deg{\phi_i} =
\infty.$$ Otherwise there are infinitely many $t \in
S(\overline{\QQ})$ such that $A_t$ are all isomorphic to each other
over $\overline{\QQ}$. According to a well-known geometric
finiteness theorem, given any abelian variety $A^0$ and $d \in \NN$
there are only finitely many isomorphism classes of polarized
abelian varieties $(A^0, \lambda)$ with $\lambda$ of degree $d$.
These two facts together force the modular map $j: S \to M_{g, d}$
to be constant. This contradicts to the non-isotriviality of $A$ and
proves the claim.

Concerning isogenous $\phi_i$ of abelian varieties Faltings'
inequality (\cite{Fa83}) gives
\begin{eqnarray}\label{general faltings inequality}
h_F(A_{t_i}) \leq h_{F}(A') + \log\deg{\phi_i}/2.
\end{eqnarray}
Under the isogeny canonical heights satisfy $\hat{h}_{A_{t_i},
\mathcal{L}_{t_i}}(s(t_i)) = \hat{h}_{A',
\phi_i^*(\mathcal{L}_{t_i})}(\gamma_i)$. The Euler characteristics
satisfy $\chi(\phi_i^*(\mathcal{L}_{t_i})) = \deg{\phi_i} \cdot
\chi(\mathcal{L}_{t_i}) =  d \deg{\phi_i}$. By the lemma in the last
section there exists a positive constant $c_1$ such that
\begin{eqnarray}\label{change of canonical height}
\hat{h}_{A_{t_i}, \, \mathcal{L}_{t_i}}(s(t_i)) \ge
c_1(\deg{\phi_i})^{1/g}.
\end{eqnarray}

The group morphism $[2]: A/S \to A/S$ extends to a morphism on the
semiabelian scheme $\overline{A}/\overline{S}$. As mentioned before
$\mathcal{L}$ gives a projective embedding of $\overline{A}$. By
this condition a theorem of Silverman-Tate \cite[Theorem
A]{Silverman} applies, and consequently there exists positive
constants $c_2$ and $c_3$ such that
\begin{eqnarray}\label{SilvermanTate}
\left|\,\hat{h}_{A_{t_i}, \ \mathcal{L}_{t_i}}(s(t_i)) -
h_{\overline{A},\ \mathcal{L}}(s(t_i))\right| < c_2h_{\overline{S},
\ \epsilon^*(\mathcal{L})}(t_i) + c_3.
\end{eqnarray}
Because $h_{\overline{A},\ \mathcal{L}}(s(t_i)) = h_{\overline{S},\
s^*(\mathcal{L})}(t_i)$ and because both $\epsilon^*(\mathcal{L})$
and $ s^*(\mathcal{L})$ are ample, there exists positive constants
$c_4$ and $c_5$ such that
\begin{eqnarray}\label{comparison of weil height}
h_{\overline{A},\ \mathcal{L}}(s(t_i)) \leq c_4 h_{\overline{S}, \
\epsilon^*(\mathcal{L})}(t_i) + c_5
\end{eqnarray}


As indicated in \cite{Zarhin}, Zarhin's trick works for families and
therefore $B = (A \times A^{\vee})^4$ is an abelian scheme over $S$
with principal polarization. Because a constant family of abelian
varieties contains no nonconstant subfamily, $B$ is also
non-isotrivial. The modular map $J: S \to M_{g, 1}$ attached to $B$
with respect to this principle polarization is nonconstant. Let
$\mathcal{N}$ be an ample line bundle of the Baily-Borel
compactification of $M_{g, 1}$. By another inequality of Faltings
\cite{Fa83}, by $h_{F}((A_t \times A_t^{\vee})^4) = 8 h_F(A_t)$ and
by the fact that $h_{\overline{S}, \ \epsilon^*(\mathcal{L})} $ is
almost proportional to $ h_{\overline{S}, \ J^*(\mathcal{N})}$ there
exist positive constants $c_6$, $c_7$ and $c_8$ such that
\begin{eqnarray}\label{deepest inequality}
|h_F(A_{t_i}) - c_6h_{\overline{S}, \ \epsilon^*(\mathcal{L})}(t_i)|
\leq c_7 +c_8 \log(\max(1, h_{\overline{S}, \
\epsilon^*(\mathcal{L})}(t_i))).
\end{eqnarray}
Notice that although not explicitly mentioned in \cite{Fa83}, one
can check carefully (or see the lemma below) that $c_8$ is
independent of the number field $K$. Combining (\ref{change of
canonical height}), (\ref{SilvermanTate}) and (\ref{comparison of
weil height}) there exist positive constants $c_9$ and $c_{10}$ such
that
\begin{eqnarray}\label{final1}
c_9(\deg{\phi_i})^{1/g} \leq h_{\overline{S}, \
\epsilon^*(\mathcal{L})}(t_i) + c_{10}.
\end{eqnarray}
Combining (\ref{general faltings inequality}) and (\ref{deepest
inequality}) there exist positive constants $c_{11}, c_{12}$  and
$c_{13}$ such that
\begin{eqnarray}\label{final2}
h_{\overline{S}, \ \epsilon^*(\mathcal{L})}(t_i) \leq c_{11} +
c_{12} \log(\max(1, h_{\overline{S}, \
\epsilon^*(\mathcal{L})}(t_i))) + c_{13}\log(\deg{\phi_i}).
\end{eqnarray}
It is clear that (\ref{final1}) contradicts to (\ref{final2}) as the
degree of the isogenies $\deg{\phi_i}$ goes to the infinity.
\end{proofof}

Lastly we sketch a calculation to make sure that positive constants
$c_3, c_4$ obtained in \cite[p.356]{Fa83} are independent of number
fields.
\begin{lemma}
Let $X \subset \PP_{\ZZ}^n$ be Zariski-closed, $Y \subset X$ closed,
$\|\ \|$ a hermitian metric on $\mathcal{O}(1)|(X(\CC) - Y(\CC))$
with logarithmic singularities along $Y$ and $\|\ \|_1$ a hermitian
metric on $\mathcal{O}(1)|X(\CC)$. For $x \in X(\overline{\QQ}) -
Y(\overline{\QQ})$ one defines $h(x)$ and $h_1(x)$ as in
\cite{Fa83}. There exists positive constant $c_3$ and $c_4$ such
that for all $x \in X(\overline{\QQ}) - Y(\overline{\QQ})$ we have
\begin{eqnarray*}
|h(x) - h_1(x)| \leq c_3 + c_4\log(\max(1, h_1(x))).
\end{eqnarray*}
\end{lemma}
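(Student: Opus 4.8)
The plan is to reduce the statement to a purely archimedean comparison of the two metrics, and then to exploit the weak ($\log\log$) nature of the singularity of $\|\ \|$ by means of the concavity of $t\mapsto\log(1+t)$. Since $h$ and $h_1$ are built from the same integral model $X\subset\PP^n_{\ZZ}$ and the same line bundle $\calO(1)$, differing only in the choice of metric at the archimedean places, for $x\in X(\overline{\QQ})-Y(\overline{\QQ})$ defined over a number field $K$, with $\sigma$ running over the $[K:\QQ]$ complex embeddings of $K$, the product formula gives
\begin{eqnarray*}
h(x)-h_1(x)=\frac{1}{[K:\QQ]}\sum_{\sigma}\varphi(\sigma x),
\qquad
\varphi(y):=\log\frac{\|t\|_1(y)}{\|t\|(y)},
\end{eqnarray*}
where $t$ is any local generating section of $\calO(1)$ near $y$; the function $\varphi$ on $X(\CC)-Y(\CC)$ is independent of the choice of $t$, and it is here that the hypothesis $x\notin Y$ is used, ensuring that each $\varphi(\sigma x)$ is finite.

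First I would bound $\varphi$ pointwise. By an embedded resolution (heights and the logarithmic-singularity condition are functorial under a proper modification) we may assume $Y$ is a normal crossing divisor, and cover the compact space $X(\CC)$ by finitely many coordinate charts on which $Y=\{z_1\cdots z_k=0\}$; the hypothesis that $\|\ \|$ has logarithmic singularities along $Y$ means exactly that $\|t\|$ and $\|t\|^{-1}$ grow at most like a power of $\prod_j\log(1/|z_j|)$ there. Hence there is a constant $C_0$, depending only on $X$, $Y$ and the two metrics, with $|\varphi(y)|\le C_0\bigl(1+\log^+\log\tfrac{1}{\mathrm{dist}(y,Y)}\bigr)$, where $\mathrm{dist}$ is the Fubini--Study distance and $\log^+=\max(0,\log)$. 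Fix homogeneous forms $g_1,\dots,g_N\in\ZZ[x_0,\dots,x_n]$ of a common degree $D$ cutting out $Y$ in $X$. By the \L ojasiewicz inequality on $X(\CC)$, $1/\mathrm{dist}(y,Y)$ is polynomially bounded in $\|y\|^D/\max_j|g_j(y)|$, so, putting $\lambda_v(x):=\log^+\bigl(\|x\|_v^D/\max_j\|g_j(x)\|_v\bigr)\ge 0$ at every place $v$ of $K$, one obtains a constant $C_1$ with $|\varphi(\sigma x)|\le C_1\bigl(1+\log^+\lambda_\sigma(x)\bigr)$ for every archimedean $\sigma$.

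The rest is soft, and uniform in $K$. Summing the last bound over the archimedean $\sigma$ and using $\log^+t\le\log(1+t)$ together with the concavity of $t\mapsto\log(1+t)$ (Jensen's inequality, which is insensitive to the number of terms $[K:\QQ]$),
\begin{eqnarray*}
|h(x)-h_1(x)|\le\frac{1}{[K:\QQ]}\sum_{\sigma}|\varphi(\sigma x)|
\le C_1+C_1\log\Bigl(1+\frac{1}{[K:\QQ]}\sum_{\sigma}\lambda_\sigma(x)\Bigr).
\end{eqnarray*}
Since $\lambda_v\ge 0$ at the finite places, $\tfrac{1}{[K:\QQ]}\sum_\sigma\lambda_\sigma(x)\le\sum_v\tfrac{n_v}{[K:\QQ]}\lambda_v(x)$ (with $n_v$ the local degrees), and the right-hand side is a Weil function for the closed subscheme $Y\subset X$, hence is $\le c\,h_1(x)+c'$ with $c,c'$ depending only on $X$, $Y$ and the $g_j$. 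Substituting and absorbing constants through $\log(a+b)\le\log 2+\max(\log a,\log b)$ yields $|h(x)-h_1(x)|\le c_3+c_4\log(\max(1,h_1(x)))$ with $c_3,c_4$ independent of $K$, as claimed. The one genuine point is the local estimate on $\varphi$; everything afterwards is formal, and the independence of the number field is automatic precisely because Jensen's inequality and the functoriality of Weil functions are both dimension-free. That the bound is logarithmic in $h_1$ rather than linear is exactly the footprint of the fact that ``logarithmic singularities'' here means $\log\log$ growth of the metric potential in the distance to $Y$.
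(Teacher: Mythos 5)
Your argument is correct, and at its core it is the same proof as the paper's: both reduce $|h-h_1|$ to the archimedean average of the metric-ratio function $\varphi$, which the logarithmic-singularity hypothesis bounds by a $\log\log$ of the proximity to $Y$; both dominate the averaged proximity by $h_1(x)+O(1)$ using that the finite-place contributions attached to integral equations of $Y$ are non-negative; and both obtain independence of the field $K$ from exactly the same concavity (Jensen/AM--GM) step. The only genuine difference is how proximity to $Y$ is quantified. You route through the Fubini--Study distance, an embedded resolution, the \L ojasiewicz inequality for auxiliary integral forms $g_j$ of degree $D$, and the Weil-function formalism for the subscheme $Y$; the paper instead reduces to the case where $Y$ is cut out on $X$ by global sections $f_1,\dots,f_r\in\Gamma(X/\ZZ,\mathcal{O}(1))$ with $\|f_i\|_1\le 1$, picks $f_1$ with $f_1(x)\neq 0$, and uses $-\log\|f_1\|_1(\sigma(x))$ itself as the gauge of proximity, so that the key bound $\sum_\sigma-\log\|f_1\|_1(\sigma(x))\le[K:\QQ]\,h_1(x)$ is immediate and no resolution, distance function or \L ojasiewicz estimate is needed. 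Your extra machinery is harmless --- for the pointwise bound on $\varphi$ only the local form of the logarithmic-singularity condition is used, so the appeal to functoriality of heights under the modification is not actually required --- but the paper's choice of gauge buys a shorter, more self-contained estimate, while your version is insensitive to the particular shape (degree one, $\|f_i\|_1\le1$) of the equations defining $Y$.
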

\begin{proof}
We may assume that $Y$ is the intersection of $X$ with a linear
subspace and the set of common zeros of $f_1, \ldots, f_r \in
\Gamma(X/\ZZ, \mathcal{O}(1))$ is exactly $Y$. By multiplying the
metric we assume $||f_i||_1 \leq 1$. A rational point $x \in X(K) -
Y(K)$ corresponds to $\rho: \mathrm{Spec}(R) \to X$, where $R$ is
the integer ring of the number field $K$.  We assume $f_1(x) \neq
0$. By definition we have
\begin{eqnarray*}
[K : \QQ] h_1(x) \ge \sum_{\sigma} - \log||f_1||_1{(\sigma(x))}
\end{eqnarray*}
and $ [K: \QQ]|h(x) - h_1(x)|= \big|\sum_{\sigma}
\log\left(||f_1||/||f_1||_1\right)(\sigma(x))\big|,$ where $\sigma$
runs through all embeddings of $K \hookrightarrow \CC$. According to
logarithmic singularities of the metric there exist positive
constants $c_1, c_2$ such that for all number field $K$ and for $K$
rational point $x$
\begin{eqnarray*}
[K: \QQ]|h(x) - h_1(x)| \leq c_1[K: \QQ] + c_2 \sum_{\sigma}\log
\left(-\log||f_1||_1(\sigma(x))\right).
\end{eqnarray*}
Furthermore we have
\begin{eqnarray*}
\sum_{\sigma} \log \big(- \log||f_1||_1(\sigma(x))\big) &\leq&
\log\big(\sum_{\sigma} - \log||f_1||_1{(\sigma(x))}/[K:
\QQ]\big)^{[K : \QQ]} \\ &\leq& [K: \QQ]\log h_1(x).
\end{eqnarray*}
The above inequalities prove the desired claim.
\end{proof}

\

\begin{Acknowledgements}
\noindent Ming-Xi Wang would like to thank P. Habegger for
introducing him this subject and for helpful conversations.  We
thank C. Fuchs and G. R\'emond for their comments.
\end{Acknowledgements}


\bigskip
\noindent


\begin{thebibliography}{99}{\small

\bibitem{AMRT} A. Ash, D. Mumford, M, Rapoport, Y. Tai, {\it Smooth compactification of locally symmetric
varieties}, 2nd ed. Cambridge (2010).

\bibitem{Andre} Y. Andr\'e, Shimura varieties, subvarieties, and CM points. Six lectures at the University of Hsinchu
(Taiwan). August-September 2001 (with an appendix by C.L. Chai),
http://www.math.umd.edu/~yu/notes.shtml.

\bibitem{Ber} D. Bertrand, Minimal heights and polarizations on
group varieties, Duke Math.J. {\bf 80} (1995), 223--250.

\bibitem{Fa83} G. Faltings, Endlichkeitss\"atze f\"ur abelsche
Variet\"aten \"uber Zahlk\"orpern, Invent. Math. {\bf 73} (1983),
349--366.

\bibitem{KawaguchiSilverman} S. Kawaguchi, J.H. Silverman, Dynamical
canonical heights for Jordan blocks, arithmetic degrees of orbits,
and nef canonical heights of abelian varieties, arXiv:1301.4964.

\bibitem{Habegger} P. Habegger, Special points on fibered powers of elliptic surfaces,
J. Reine Angew. Math., to appear.

\bibitem{Namikawa} Y. Namikawa, A new compactification of the Siegel
space and degeneration of abelian varieties, I, Math. Ann. {\bf 221}
(1976), 97--141; II, ibid., 201--241.


\bibitem{Poonen} B. Poonen, Spans of Heck points on modular curves,
Math.Res.Lett.  {\bf8} (2001), 767--770.

\bibitem{Prendergast} A. Prendergast-Smith, The cone conjecture for
abelian varieties, J.Math.Sci.Univ. Tokyo {\bf 19} (2012), 243--261.

\bibitem{Silverman} J.H. Silverman, Heights and the specialization
map for families of abelian varieties. J. Reine Angew. Math. {\bf
342}(1983), 197--211.

\bibitem{Zarhin} Yu.G. Zarhin, A finiteness theorem for unpolarized abelian varieties over number fields with
prescribed places of bad reduction, Invent. Math. {\bf 79} (1985),
309--321.



}
\end{thebibliography}
\end{document}